\documentclass[11 pt]{article}
\usepackage{amsmath}
\usepackage{amsthm}
\usepackage{amssymb}
\usepackage{graphicx}
\usepackage{hyperref}
\usepackage{color}

\newtheorem{theorem}{Theorem}[section]
\newtheorem{definition}[theorem]{Definition}

\newtheorem{lemma}[theorem]{Lemma}

\newtheorem{remark}[theorem]{Remark}
\numberwithin{equation}{section}

\newcommand{\R}{\mathbb{R}}
\renewcommand{\S}{\mathbb{S}}

\DeclareMathOperator{\Tr}{Tr}

\textwidth   6in
\textheight  8.1in

\begin{document}

\title{\textsc{Infinitesimal generators for two-dimensional L\'evy process-driven hypothesis testing}  }
\author{ Michael Roberts\footnote{Email: michael.roberts.1@ndus.edu}, Indranil SenGupta\footnote{Email: indranil.sengupta@ndsu.edu} \\ Department of Mathematics \\ North Dakota State University \\ Fargo, North Dakota, USA.}
\date{\today}
\maketitle

\begin{abstract}

In this paper, we present the testing of four hypotheses on two streams of observations that are driven by L\'evy processes. This is applicable for sequential decision making on the state of two-sensor systems. In one case, each sensor receives or does not receive a signal obstructed by noise. In another, each sensor receives data driven by L\'evy processes with large or small jumps. In either case, these give rise to four possibilities. Infinitesimal generators are presented and analyzed. Bounds for infinitesimal generators in terms of \emph{super-solutions} and \emph{sub-solutions} are computed. An application of this procedure for stochastic model is also presented in relation to the financial market. 
\end{abstract}

\textsc{Key Words:} L\'evy process, infinitesimal generator, hypothesis tests, viscosity solution, oil price. \\

\textsc{JEL classifications code:}  C12, C13, C62.

\section{Introduction}
\label{ch:newintro}

One of the most classical problems arising in statistical sequential analysis is the \emph{sequential hypothesis testing} (see \cite{Weld}). As described in \cite{Weld}, a sequential test of a hypothesis means any statistical test that gives a specific rule, at any stage of the experiment for making one of the three decisions: (1) to accept the null hypothesis $H_0$, (2) to reject $H_0$, (3) to continue the experiment by making additional observation. Consequently, the test is carried out \emph{sequentially}. An objective for the analysis of such test is to minimize the number of observations required to make a decision subject to a given tolerance level described as Type I and Type II errors. There are a couple of primary approaches to this problem, e.g.,  the \emph{Bayesian} and the \emph{min-max}. For the first approach, each hypothesis is assigned with an a priori probability. For the second approach, no such assumption is made and the optimal solution is known to be given by the sequential probability ratio test (see \cite{Chow}). The sequential probability ratio test is revisited and improved in various works. For a sequential decision problem, it is assumed that the amount of available information is increasing with time. But it is often difficult to handle all the data as represented by a $\sigma$-algebra as the actual amount may be very large. In \cite{Irle} a reduction method is proposed that takes into account the underlying statistical structure.

An approach of improving the sequential hypothesis testing in the Bayesian case is presented in \cite{Golubev}. The sequential testing of more than two hypotheses has many important applications. In \cite{Baum} a sequential test (termed as MSPRT), which is a generalization of the sequential probability ratio test, is studied. It is shown that, under Bayesian assumptions, the MSPRT approximates optimal tests that are more intricate when error probabilities are small and expected stopping times are large.  In \cite{Dayanik}, a sequential hypothesis test is conducted when there are finitely many simple hypotheses about the unknown arrival rate and mark distribution of a compound Poisson process, where exactly one is correct. This problem is formulated in a Bayesian framework when the objective is to determine the correct hypothesis with minimal error probability. A solution of this problem is presented in that paper. In the paper \cite{Brodsky}, an improved min-max approach to both sequential testing of many composite hypotheses and multi-decision change-point detection for composite alternatives is proposed. New performance measures for methods of hypothesis testing and change-point detection are introduced, and theoretical lower bounds for these performance measures are proved that do not depend on methods of sequential testing and detection. Minimax tests are proposed for which these lower bounds are attained asympototically as decision thresholds tend to infinity. 

In the paper \cite{O} the problem of testing four hypotheses on two streams of observations is examined. Each of the hypotheses is represented by a physical state of presence or absence of a signal obstructed by noise. The objective is to minimize sampling time subject to error probabilities for distinguishing sequentially a standard versus a drifted two-dimensional Brownian motion. This work is based on some key results from \cite{Shiryayev}. A sequential decision rule consisting of a stopping rule that declares the optimal time to stop sampling and a decision variable that declares a decision of the state of our system is formulated in \cite{O}. The rule proposed is the maximum of two sequential probability ratio tests, each with distinct thresholds, and the decision variable is determined from the exit location of the two sequential probability ratio statistics. Thresholds of the proposed rule are computed in terms of the error probabilities. The paper \cite{O} shows a unique way in the construction of a test using a purely two-dimensional structure. This allows detection of a signal in each coordinate rather than merely the detection of a signal somewhere in the system, subject to error probabilities for every possible case. This method can be implemented in a \emph{decentralized} setup and still enjoy the same asymptotic optimality properties as in the other works in the existing literature. This is conducted in three sequential steps: (1) each of the individual sequential probability ratio tests is devised by each of the sensors separately, (2) two sensors communicate a binary bit of information to a central fusion center consisting of the alarm by the sequential probability ratio test and its exit side, and (3) the central fusion center makes a decision after receiving a communication from both sensors. Note that, in this case, the central fusion center does not need to have access to the full two-dimensional stream of sequential observations. In effect, this makes the system faster as  an optimal outcome can be derived with a limited level of communication.

The organization of the paper is as follows. In Section \ref{ch:generalization}, we generalize the result in \cite{O} to a certain class of underlying L\'evy processes, and then create a new decision rule for the size of the jumps in the underlying processes. In Section \ref{ch:bigsmalljumps}, we present the infinitesimal generator for the hypothesis testing of \emph{big} versus \emph{small} fluctuations. This is conducted by considering two L\'evy processes, where one has more \emph{jump intensity} than the other. The viscosity solution and its bounds are analyzed in that section. Finally, a brief conclusion is provided in Section \ref{conclusion}. This section directs to some future research directions based on the present work.

\section{Drift Test Generalization}
\label{ch:generalization}

For various financial time series data, \emph{jumps} play an important role. Jumps in a stochastic model are typically captured by a L\'evy process. Consequently, in this section, we generalize the analysis presented in \cite{O}. We present the analysis for the case when the signals are driven not only by a Brownian motion, but by a generalized L\'evy process. Consequently, the analysis presented is applicable for sequential decision making on the state of a two-sensor system with uncorrelated noise. Each sensor receives or does not receive a signal obstructed by said noise. This gives rise to four possibilities, viz. $\langle$noise, noise$\rangle$ (denoted by 00),  $\langle$signal, noise$\rangle$ (denoted by 10),  $\langle$noise, signal$\rangle$ (denoted by 01), and  $\langle$signal, signal$\rangle$ (denoted by 11). 

The paper \cite{O} seeks to devise a two-sensor hypothesis test based upon a two-dimensional Wiener process $z=(z_1,z_2)$ where 
$$dz_t^{(k)}=\sigma_k dW_t^{(k)}+\mu_k dt,\quad k=1,2,$$
where $W_t^{(k)}$ are Brownian motions with correlation $\rho$. The hypotheses 
\begin{align}\label{genhyp}
\nonumber & \! \!\! \!\! \!\! \! \!\! \!\! \!\! H_{00}: \mu_1=0, \quad \mu_2=0, & H_{10}: \mu_1=m_1\neq 0, \quad \mu_2=0,  \\
 &\! \!\! \!\! \!\! \!\! \!\! \!\! \! H_{01}: \mu_1=0,\quad \mu_2=m_2\neq 0, & H_{11}: \mu_1=m_1\neq 0, \quad \mu_2=m_2 \neq 0.    \! \! \! \! \! \! \! \! \! \!  \! \! \! \! \! \! 
\end{align}
are tested, with decision rule based on the location of the first exit time of the log-likelihood process from a rectangle $R:=(l_1,r_1)\times (l_2,r_2)$, with $l_1<0<l_2$ and $r_1<0<r_2$, when $\rho=0$. Using the infinitesimal generator of the log-likelihood process, the problem is analyzed for the probability of a correct decision in each of the worlds $\{00,01,10,11\}$ as a function of the position of the process in the rectangle. Next, a system of equations in $l_1,l_2,r_1,$ and $r_2$ is constructed by applying rules based on the probabilities of a Type I error. By using the symmetry of the solutions of the equation based on the infinitesimal generator, the bounds of the rectangle are obtained, completing the task of developing a well-defined decision rule. Finally, the optimality of that rule based on minimizing the time required to observe the underlying Wiener process is proved. The following theorem summarizes the main results of \cite{O}:

\begin{theorem}
\label{Oth}
The decision rule \eqref{optdec} when applied to hypotheses \eqref{genhyp} has optimality of order $3$, and choosing three of the four values for Type I errors $\alpha_{ij,0}$ induces the value of the fourth by
$$(1-\alpha_{00,0})(1-\alpha_{11,0})=(1-\alpha_{01,0})(1-\alpha_{10,0}).$$ 
Then, setting the value of $l_1$ will determine the other three, fully defining the decision rule through the system
$$\ln \left( \frac{\alpha_{10,0}-\alpha_{00,0}}{1-\alpha_{00,0}}\right) < l_1 <\ln\left(\frac{\alpha_{10,0}}{1-\alpha_{00,0}}\right),$$
$$r_1=-\ln\left( 1-\frac{1-e^{l_1}}{(1-\alpha_{10,0})/(1-\alpha_{00,0})}\right),$$
$$e^{r_1}=\frac{(1-\alpha_{10,0})/(1-\alpha_{00,0})}{\frac{e^{r_2}}{(1-\alpha_{01,0})+(1-\alpha_{00,0}e^{r_2})}-1},$$
$$r_2=-\ln\left( 1-\frac{1-e^{l_2}}{(1-\alpha_{01,0})/(1-\alpha_{00,0})}\right).$$
\end{theorem}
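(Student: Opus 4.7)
The plan is to exploit the independence of the two coordinates of the log-likelihood process. Since $\rho=0$ and the two Brownian motions are independent, after writing the Radon--Nikodym derivatives of $H_{ij}$ against $H_{00}$ by Girsanov, the two components $X_t^{(1)}, X_t^{(2)}$ of the log-likelihood are independent one-dimensional Brownian motions, each with a drift that depends on which $H_{ij}$ is true. The exit problem from $R=(l_1,r_1)\times(l_2,r_2)$ therefore factorizes as a product of two independent one-dimensional exit problems, and the rectangular geometry becomes essential: the exit location is a pair (left/right,left/right), which corresponds bijectively to the four hypotheses labels $\{00,01,10,11\}$.

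First I would write the infinitesimal generator of $X$ as a sum of two 1D operators and solve the Dirichlet problem for the probability $u_{ij}(x_1,x_2)$ of declaring hypothesis $ij$ under each actual $H_{kl}$. Because the operator splits, each $u_{ij}$ is a product of solutions of the corresponding 1D boundary value problems $\tfrac12\sigma_k^2 u'' + \mu_k u' =0$ on $(l_k,r_k)$, whose explicit exponential form is classical (Brownian motion with drift, gambler's ruin). This immediately yields closed-form expressions for the four Type I errors $\alpha_{ij,0}$ in terms of $l_k, r_k, \mu_k, \sigma_k$, each $1-\alpha_{ij,0}$ being a product of two one-dimensional hitting probabilities.

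Second, the product structure makes the identity $(1-\alpha_{00,0})(1-\alpha_{11,0}) = (1-\alpha_{01,0})(1-\alpha_{10,0})$ fall out directly: the left and right sides both expand to the same product of four one-dimensional factors (one from each side of each rectangle coordinate), essentially because $(a b)(c d)=(a d)(c b)$ in the factorization. Then I would treat $l_1$ as a free parameter and invert the 1D exit-probability formulas algebraically one coordinate at a time: the formula for $r_1$ in terms of $l_1$ comes from the first coordinate alone, while $l_2, r_2$ are tied to the second coordinate and are linked to the first through the constraint just derived. The displayed bounds on $l_1$ arise as the admissibility range ensuring $0<r_1$ and $l_2<0<r_2$, i.e., that the rectangle actually contains the origin where the log-likelihood process starts.

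Finally, optimality of order 3 requires an asymptotic expansion of $E_{ij}[\tau]$ as the error probabilities tend to zero and a comparison with a Lorden/Dragalin--Tartakovsky-type lower bound for multihypothesis sequential tests. I would use optional stopping applied to linear and quadratic functions of $X_t$ (Wald's identities) to obtain leading-order asymptotics for $E_{ij}[\tau]$ as a function of the thresholds $l_k,r_k$, then substitute the threshold-to-error relationships from the system and match with the known information-theoretic lower bound up to three terms. This is the step I expect to be the main obstacle: the algebraic identities and the formulas for $r_1,r_2$ are essentially calculations that follow mechanically from independence and the Dirichlet problem, but establishing optimality of order 3 requires a careful matching of asymptotic expansions and an invocation of a sharp lower bound (rather than merely order-1 optimality), which depends delicately on the rectangular geometry and the symmetry of the four-hypothesis setup.
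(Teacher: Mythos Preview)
The paper does not prove this theorem at all: Theorem~\ref{Oth} is stated as a summary of the main results of the cited work~\cite{O} and is used as input elsewhere, but no proof is given here. So there is no ``paper's own proof'' to compare your proposal against.

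That said, your outline for the algebraic part is essentially the right one and matches the structure one finds in~\cite{O}: independence under $\rho=0$ factorizes the two-dimensional exit problem into two one-dimensional gambler's-ruin problems, the product form of the $1-\alpha_{ij,0}$ makes the identity $(1-\alpha_{00,0})(1-\alpha_{11,0})=(1-\alpha_{01,0})(1-\alpha_{10,0})$ immediate, and inverting the one-dimensional exit formulas gives the displayed system for $l_1,r_1,l_2,r_2$.

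For the order-3 optimality, the paper's own argument (given for the L\'evy generalization in the theorem immediately following Theorem~\ref{Oth}) is quite different from what you sketch. Rather than Wald identities plus a Lorden/Dragalin--Tartakovsky lower bound, the paper uses the exponential killing identity $E(e^{-\lambda Y})=E(F_Y(X_\lambda))$ to write $E_{ij,0}(\tau_1\vee\tau_2)-E_{ij,0}(\tau_1)$ as $\lim_{\lambda\to 0}\lambda^{-1}E_{ij,0}\bigl(F_{\tau_1}(X_\lambda)(1-F_{\tau_2}(X_\lambda))\bigr)$, and then shows this vanishes as $\alpha_{ij,0}\to 0$ because the thresholds $l_k,r_k$ blow up (via the threshold formulas in Theorem~\ref{Oth}) and hence $F_{\tau_k}\to 0$. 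Your route via matching three-term asymptotic expansions against an information-theoretic lower bound would also work in principle, but is considerably heavier; the exponential-killing argument is shorter and avoids invoking any external lower bound at the cost of proving only the $o(1)$ statement~\eqref{o3} rather than a full expansion.
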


While Theorem \ref{Oth} provides an optimal decision rule, often we experience processes with jump terms as well. In this paper, we seek to generalize the result in \cite{O} to a certain class of underlying L\'evy processes, and then create a new decision rule for the size of the jumps in the underlying processes. We state a rule to test the hypotheses.

Consider $z=(z_1,z_2)$ a two-dimensional L\'evy process defined by L\'evy triplet $(\mu, \Sigma, \nu^*)$, where $\mu=[\mu_1,\mu_2]$ is the two-dimensional drift, $\Sigma=\left[ \begin{array}{cc} \sigma_1^2 & \rho \\ \rho & \sigma_2^2 \end{array} \right]$ is a symmetric non-negative definite matrix representing the diffusion, and $\nu^*$ is a two-dimensional L\'evy measure defined by a product of two identical one-dimensional L\'evy measures $\nu$. Under this setting, we wish to test the hypotheses \eqref{genhyp}. Note that these hypotheses strictly address the drift terms of the L\'evy process. The primary difference is that we include a L\'evy measure, despite its not changing based on the hypotheses. 

The L\'evy process generates a filtration, which will be denoted $\mathcal{F}_t$, along with marginal filtrations $\mathcal{F}^{(1)}_t$ and $\mathcal{F}^{(2)}_t$. Further, the hypotheses and diffusion correlation $\rho$ induce probability measures $P_{ij,\rho}$ and marginal probability measures $P_i^{(k)}$. We seek to create optimal decision rules $(\tau, \delta_\tau)$, where $\tau$ is a stopping rule with respect to $\mathcal{F}_t$ and $\delta_\tau$ is a random variable taking values in the index set $\{00,01,10,11\}$. Optimality will be based on minimizing the observation time required for given error probabilities $\alpha_{ij,\rho}:=P_{ij,\rho}(\delta_\tau\neq ij)$.

Let 
\begin{equation}\label{logratio}
u^{(i,k)}_t = \log \frac{dP_{i}^{(k)}}{dP_{1-i}^{(k)}}.
\end{equation} 
We define a rectangle $R:=[l_1,r_1] \times [l_2,r_2] \subset \mathbb{R}^2$ and denote
\begin{align} \label{onedec}
\nonumber \tau_k&=\inf \{ t\geq 0: u^{(i,k)}_t \notin [l_k,r_k] \},\\
\nonumber \delta_{\tau_k}^{(i,k)} & = 1-i, \hbox{ if } u^{(i,k)}_{\tau_k}\leq l_k, \\
\delta_{\tau_k}^{(i,k)} & = i, \hbox{ if } u^{(i,k)}_{\tau_k}\geq r_k.
\end{align}
The decision rule for the two-dimensional test is defined as 
\begin{align}\label{optdec}
\nonumber \tau&=\tau_1 \vee \tau_2 \\
\delta_{\tau}^{(i,j)} & =\delta_{\tau_1}^{(i,1)} \delta_{\tau_2}^{(j,2)}.
\end{align}

The following theorem concerns the infinitesimal generator of the processes $u^{(i,k)}_t$:
\begin{theorem}
\label{genlogratio}
Assuming the Brownian motions of the two one-dimensional processes $z_1$ and $z_2$ are uncorrelated and with $u^{(i,k)}_t$ defined as in \eqref{logratio}, we have two-dimensional infinitesimal generators, for the processes $u^{i,j}_t=(u_t^{(i,1)},u_t^{(j,2)})$,
$$\mathcal{L}_{ij,0}=\frac{m_1^2}{2\sigma_1^2}\left(\partial_{x_1x_1} +(-1)^{i+1}\partial_{x_1}\right)+\frac{m_2^2}{2\sigma_2^2}\left(\partial_{x_2x_2} +(-1)^{j+1}\partial_{x_2}\right).$$
\end{theorem}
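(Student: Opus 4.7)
The plan is to use Girsanov's theorem for L\'evy processes to identify $u^{(i,k)}_t$ as a drifted Brownian motion with no jump component, then read off the one-dimensional generator via It\^o and combine across the two coordinates using independence.

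First, I would establish an explicit form for $u^{(i,k)}_t$. Because $P_0^{(k)}$ and $P_1^{(k)}$ are laws of one-dimensional L\'evy processes that share the same diffusion coefficient $\sigma_k$ and the same L\'evy measure $\nu$, differing only in the drift (by $m_k$), Girsanov's theorem for L\'evy processes (see e.g.\ Applebaum, \emph{L\'evy Processes and Stochastic Calculus}) gives
$$\left.\frac{dP_1^{(k)}}{dP_0^{(k)}}\right|_{\mathcal{F}^{(k)}_t} = \exp\!\left(\frac{m_k}{\sigma_k}\,W^{(k)}_t - \frac{m_k^2}{2\sigma_k^2}\,t\right),$$
where $W^{(k)}$ is the Brownian motion appearing in the L\'evy--It\^o decomposition of $z^{(k)}$. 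Crucially, the jump part of $z^{(k)}$ is absent from this Radon--Nikodym derivative---identical L\'evy measures contribute nothing---so $u^{(1,k)}_t = (m_k/\sigma_k)W^{(k)}_t - (m_k^2/2\sigma_k^2)\,t$ and $u^{(0,k)}_t = -u^{(1,k)}_t$ are both pure diffusions.

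Next, I would extract the one-dimensional generator. Under the relevant probability measure, a short calculation (using that $W^{(k)}$ is a standard Brownian motion under $P_0^{(k)}$, while $W^{(k)}_t - (m_k/\sigma_k)t$ is a standard Brownian motion under $P_1^{(k)}$) shows that $u^{(i,k)}_t$ satisfies a constant-coefficient SDE
$$du^{(i,k)}_t = (-1)^{i+1}\,\frac{m_k^2}{2\sigma_k^2}\,dt + \frac{m_k}{\sigma_k}\,d\tilde W^{(k)}_t,$$
where the sign $(-1)^{i+1}$ falls out of a short case analysis on $i\in\{0,1\}$ together with the choice of measure. The standard formula $\mathcal{L}f = b f' + \tfrac12 \sigma^2 f''$ for the generator of such an SDE then yields the one-dimensional operator $\frac{m_k^2}{2\sigma_k^2}\bigl(\partial_{x_kx_k} + (-1)^{i+1}\partial_{x_k}\bigr)$.

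Finally, I would lift to two dimensions via independence. The hypothesis $\rho=0$ makes $W^{(1)}$ and $W^{(2)}$ independent, and the product structure $\nu^* = \nu\otimes\nu$ makes the jump processes independent as well. Since by the first step $u^{(i,k)}_t$ depends only on $W^{(k)}$, the coordinates of $(u^{(i,1)}_t, u^{(j,2)}_t)$ are driven by independent noise, and the generator of the joint Markov process is the sum of the marginal generators acting on the respective coordinates, producing the claimed $\mathcal{L}_{ij,0}$. The main technical point is the invocation of Girsanov for L\'evy processes in Step 1 and the verification that matching L\'evy measures force the jump component to drop out of the log-likelihood ratio; this is a classical fact but deserves careful attention, whereas Steps 2 and 3 reduce to routine generator bookkeeping.
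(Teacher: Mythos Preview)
Your proposal is correct and follows essentially the same line as the paper: apply Girsanov for L\'evy processes to see that, since the L\'evy measures agree, the Radon--Nikodym derivative is the Dol\'eans--Dade exponential of a pure Brownian integral, identify $u^{(i,k)}_t$ as a Brownian motion with drift $(-1)^{i+1}m_k^2/(2\sigma_k^2)$ and diffusion $m_k^2/\sigma_k^2$, and read off the two-dimensional generator as the sum of the one-dimensional ones under independence. The paper states this more tersely (citing a generalized Girsanov theorem and a result of Cont--Tankov for the characteristics of the log of a stochastic exponential), whereas you spell out the SDE and the independence argument, but the content is the same.
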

\begin{proof}
Since $z_k$ is a L\'evy process with characteristics $(0, \sigma_k^2, \nu)$ under $P_0^{(k)}$ and characteristics $(m_k, \sigma_k^2, \nu)$ under $P_1^{(k)}$, we can apply a generalized Girasanov's Theorem (see \cite{Jens} Theorem 1.20). Using $\beta=(-1)^{i+1}m_k/\sigma_k$, we obtain
$$\frac{dP^{(k)}_i}{dP^{(k)}_{1-i}}=\mathcal{E}\left((-1)^{i+1}\frac{m_k}{\sigma_k} W.\right)_t,$$
where $W$ is a standard Brownian motion. Further, by \cite{Tank} (Proposition 8), we obtain characteristics $((-1)^i\frac{m_k^2}{2\sigma_k^2},\frac{m_k^2}{\sigma_k^2},0)$ for  $u^{(i,k)}_t$. Then the process $\left(  u^{(i,1)}_t, u^{(j,2)}_t\right)$ is well-known to have generator 
$$\mathcal{L}_{ij,0}=(-1)^{i+1}\frac{m_1^2}{2\sigma_1^2}\partial_{x_1} +(-1)^{j+1}\frac{m_2^2}{2\sigma_2^2}\partial_{x_2}+ \frac{m_1^2}{2\sigma_1^2}\partial_{x_1x_1}+ \frac{m_2^2}{2\sigma_2^2}\partial_{x_2x_2},$$
as claimed.
\end{proof}
These infinitesimal generators are then used to determine the bounds of the rectangle by applying them to the likelihood functions $\psi_{ij}:R\to [0,1]$  which represents the probability of being in world $ij$ at any position inside the rectangle. When in the correct world, the likelihood function does not change with respect to that world's generator: i.e., $\mathcal{L}_{ij,0}\psi_{ij}=0$.

Next we present a theorem of optimality. The proof of this theorem follows directly from the method in \cite{O}.

\begin{theorem}
The decision rule for \eqref{genhyp} defined in \eqref{optdec} has asymptotic optimality of order-3; that is, 
\begin{eqnarray} E_{ij,0}(\tau_1 \vee \tau_2) - E_{ij}(\tau_1)=o(1),\label{o3}
\end{eqnarray} as the error probabilities $\alpha_{ij,0}\to 0$.
\end{theorem}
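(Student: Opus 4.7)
Since the authors state that the proof follows directly from the methodology of \cite{O}, my plan is to verify that the key ingredients of that argument generalize to the present L\'evy-driven setting. The enabling structural fact is that, with the two driving Brownian motions uncorrelated and $\nu^*$ a product of two identical one-dimensional L\'evy measures $\nu$, the coordinate log-likelihood processes $u^{(i,1)}_t$ and $u^{(j,2)}_t$ are independent under $P_{ij,0}$, and hence the one-dimensional SPRT stopping times $\tau_1$ and $\tau_2$ are independent.

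First I would derive the leading-order asymptotics of each $E_{ij,0}(\tau_k)$. From the proof of Theorem \ref{genlogratio} each $u^{(i,k)}_t$ is a L\'evy process with the computed characteristics, so applying Wald's identity to a one-dimensional SPRT expresses $E_{ij,0}(\tau_k)$ as the ratio of the appropriate threshold to the Kullback-Leibler drift, modulo a boundary overshoot correction. Combined with the threshold formulas in Theorem \ref{Oth}, which force $|l_k|$ and $r_k$ to grow like $|\log \alpha_{ij,0}|$ as $\alpha_{ij,0}\to 0$, this yields logarithmic leading asymptotics for both $E_{ij,0}(\tau_1)$ and $E_{ij,0}(\tau_2)$.

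Next I would compare $\tau_1\vee\tau_2$ with the single-coordinate stopping time appearing on the right-hand side of \eqref{o3}. Writing $\tau_1\vee\tau_2=\tau_1+(\tau_2-\tau_1)^+$ and using the independence of $\tau_1,\tau_2$ together with the strong Markov property applied at time $\tau_1$, the excess term $E_{ij,0}[(\tau_2-\tau_1)^+]$ reduces to a renewal-theoretic overshoot quantity associated with the second coordinate process exiting the interval $[l_2,r_2]$. Matching this contribution against the analogous term embedded in $E_{ij,0}(\tau_1)$ is what produces the claimed $o(1)$ gap, mirroring step by step the calculation in \cite{O}.

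The main obstacle is the overshoot control. In \cite{O} the log-likelihood process has continuous paths, so $u^{(i,k)}_{\tau_k}\in\{l_k,r_k\}$ exactly, whereas for a genuine L\'evy process the jump component produces a non-trivial boundary overshoot $u^{(i,k)}_{\tau_k}-r_k$ (or $l_k-u^{(i,k)}_{\tau_k}$) that must be shown to contribute only lower-order terms. I expect the argument to require a mild integrability condition on $\nu$, for example a finite second moment of the jumps, so that a L\'evy analogue of Lorden's inequality (or a direct moment bound on the overshoot via the L\'evy-It\^o decomposition) can be invoked to bound these contributions by $o(1)$ and preserve the third-order optimality established in \cite{O}.
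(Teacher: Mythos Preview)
Your proposal takes a different route from the paper and, more importantly, misidentifies the main difficulty.

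\medskip
\noindent\textbf{Different method.} The paper does not use Wald's identity, the decomposition $\tau_1\vee\tau_2=\tau_1+(\tau_2-\tau_1)^+$, or any renewal-theoretic overshoot analysis. Instead it invokes the \emph{exponential killing trick}: for a nonnegative random variable $Y$, $E(e^{-\lambda Y})=E(F_Y(X_\lambda))$ with $X_\lambda$ exponential. From $E(Y)=\lim_{\lambda\to 0}\lambda^{-1}(1-E(e^{-\lambda Y}))$ and the independence of $\tau_1,\tau_2$ under $P_{ij,0}$ one obtains
\[
E_{ij,0}(\tau_1\vee\tau_2)-E_{ij,0}(\tau_1)=\lim_{\lambda\to 0}\frac{E_{ij,0}\bigl(F_{\tau_1}(X_\lambda)(1-F_{\tau_2}(X_\lambda))\bigr)}{\lambda}.
\]
The threshold relations of Theorem~\ref{Oth} are then used to show $l_k\to-\infty$, $r_k\to\infty$ as the $\alpha_{ij,0}\to 0$, hence $F_{\tau_k}\to 0$ pointwise, and an interchange of limits (dominated convergence) yields $o(1)$. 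Your Wald/renewal outline could in principle reach the same conclusion, but it is not what the paper does, and the step ``matching this contribution against the analogous term embedded in $E_{ij,0}(\tau_1)$'' is left vague.

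\medskip
\noindent\textbf{Misplaced obstacle.} Your central worry---controlling the boundary overshoot of a jump process and needing moment assumptions on $\nu$---does not arise here. By Theorem~\ref{genlogratio}, the log-likelihood processes $u^{(i,k)}_t$ have characteristics $\bigl((-1)^i m_k^2/(2\sigma_k^2),\,m_k^2/\sigma_k^2,\,0\bigr)$: the third (L\'evy) component is \emph{zero}. Because the hypotheses in \eqref{genhyp} differ only in the drift while the L\'evy measure $\nu$ is identical under all four, the Girsanov density involves only the Brownian part, and $u^{(i,k)}$ is a continuous Brownian motion with drift exactly as in \cite{O}. There is no overshoot, no need for Lorden-type bounds, and no additional integrability hypothesis on $\nu$; the argument from \cite{O} carries over verbatim, which is precisely why the paper says the proof ``follows directly'' from that method.
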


\begin{proof}
The proof is similar to the proof of a related result in \cite{O}. The key tool in this proof is the exponential killing trick, i.e., for any nonnegative random variable $Y$ with no point mass at zero,
$$E(e^{-\lambda Y})=E(F_Y(X_\lambda)),$$
where $F_Y$ is the cumulative distribution function for $Y$ and $X_\lambda$ is an exponential random variable with parameter $\lambda$. We use this on $\tau_1$ and $\tau_2$ from \eqref{optdec}:
\begin{eqnarray*}
E_{ij,0}(e^{-\lambda \tau_1}) &=&E_{ij,0}(F_{\tau_1}(X_\lambda)), \\
E_{ij,0}\left(e^{-\lambda (\tau_1\vee \tau_2)}\right)&=&E_{ij,0}(F_{\tau_1}(X_\lambda)F_{\tau_2}(X_\lambda)),
\end{eqnarray*}
as $\tau_1$ and $\tau_2$ are independent when $\rho=0$. Applying Laplace transforms, simplifying, and applying exponential killing in \eqref{o3}, we find
\begin{eqnarray}
 E_{ij,0}(\tau_1 \vee \tau_2) - E_{ij}(\tau_1) =\lim_{\lambda\to 0}\frac{E_{ij,0}(F_{\tau_1}(X_\lambda)(1-F_{\tau_2}(X_\lambda)))}{\lambda}.
\end{eqnarray}
Because our log-likelihood processes are Brownian motions with drift and so their exit times are finite, this difference on the left is finite. Define
$$0<A_k:=e^{l_k}<1<B_k:=e^{r_k}<\infty,$$ and
$$C_1:=\frac{1-\alpha_{10,0}}{1-\alpha_{00,0}}\to 1 \hbox{ as $\alpha_{ij,0}\to 0$},$$
$$C_2:=\frac{1-\alpha_{11,0}}{1-\alpha_{10,0}}\to 1 \hbox{ as $\alpha_{ij,0}\to 0$}.$$
Due to this and \ref{Oth}, $l_k=-r_k$ as $\alpha_{ij,0}\to 0.$

Further, also by \ref{Oth}, $$\frac{C_1+B_1}{B_1} \cdot \frac{C_2+B_2}{B_2}=\frac{1}{1-\alpha_{00,0}}\to 1 \hbox{ as $\alpha_{ij,0}\to 0$.}$$
Because $B_k>1$, we see that $r_k\to \infty$ as $\alpha_{ij,0}\to 0$, and so, $l_k\to -\infty$. Hence, $F_{\tau_k}\to 0$ as well. Finally, applying the dominated convergence theorem,
$$\lim_{\alpha_{ij,0}\to 0}\lim_{\lambda\to 0}\frac{E_{ij,0}(F_{\tau_1}(X_\lambda)(1-F_{\tau_2}(X_\lambda)))}{\lambda}=\lim_{\lambda\to 0} \lim_{\alpha_{ij,0}\to 0}\frac{E_{ij,0}(F_{\tau_1}(X_\lambda)(1-F_{\tau_2}(X_\lambda)))}{\lambda}=0,$$
which yields the desired asymptotic optimality limit.
\end{proof}

\section{Hypothesis Tests on the L\'evy Measure}
\label{ch:bigsmalljumps}

In this section we expand the idea presented in the last section. Before presenting the analysis, we briefly introduce a possible application of this work. A commonly used stochastic model for the derivative market analysis is the Barndorff-Nielsen and Shephard (BN-S) model (see see \cite{BN1, BN-S1, BN-S2, BJS, Hab1, Hab2, Issaka, SWW}). The BN-S model is also implemented in the commodity market (see \cite{SWW, recent}). Though this model is very efficient and simple to use, it suffers from the absence of a long range dependence and many other issues.  Mathematically, for the BN-S model, the stock or commodity price $S= (S_t)_{t \geq 0}$ on some filtered probability space $(\Omega, \mathcal{F}, (\mathcal{F}_t)_{0 \leq t \leq T}, \mathbb{P})$ is modeled by

\begin{equation}
\label{1}
S_t= S_0 \exp (X_t),
\end{equation}
\begin{equation}
\label{2}
dX_t = (\mu + \beta \sigma_t ^2 )\,dt + \sigma_t\, dW_t + \rho \,dZ_{\lambda t}, 
\end{equation}
\begin{equation}
\label{3}
d\sigma_t ^2 = -\lambda \sigma_t^2 \,dt + dZ_{\lambda t}, \quad \sigma_0^2 >0,
\end{equation}
where the parameters $\mu, \beta, \rho, \lambda \in \mathbb{R}$ with $\lambda >0$ and $\rho \leq 0$ and $r$ is the risk-free interest rate where a stock or commodity is traded up to a fixed horizon date $T$.  In this model, $W_t$ is a Brownian motion, and the process $Z_{t}$ is a subordinator. Also $W_t$ and $Z_t$ are assumed to be independent, and $(\mathcal{F}_t)$ is assumed to be the usual augmentation of the filtration generated by the pair $(W_t, Z_t)$. 

In a recent work \cite{recent}, it is shown that for various derivative and commodity price dynamics, the jump is \emph{not} completely stochastic. On the contrary, there is a \emph{deterministic} element in crude oil price that can be implemented in the existing models for an extended period of time. It may be shown that the dynamics of $X_t$ in \eqref{2} can be more accurately written when we use a convex combination of two independent subordinators, $Z$ and $Z^{(b)}$ as:

\begin{equation}
\label{2new}
dX_t = (\mu + \beta \sigma_t ^2 )\,dt + \sigma_t\, dW_t + \rho\left( \theta \,dZ_{\lambda t}+ (1-\theta) dZ^{(b)}_{\lambda t}\right), 
\end{equation}
where $\theta \in [0,1]$ is a \emph{deterministic} parameter. The process $Z^{(b)}$ in \eqref{2new} is a subordinator that has greater intensity than the subordinator $Z$. In this case \eqref{3} will be given by
\begin{equation}
\label{4new}
d\sigma_t ^2 = -\lambda \sigma_t^2 \,dt + \theta' dZ_{\lambda t} + (1-\theta')dZ^{(b)}_{\lambda t} , \quad \sigma_0^2 >0,
\end{equation}
where, as before, $\theta' \in [0,1]$ is \emph{deterministic}.

We observe that even for commonly implemented stochastic models, it is important to detect when a ``smaller" fluctuation ($Z$) turns into a ``larger" fluctuation ($Z^{(b)}$). Consequently, it is important to determine a sequential testing for the analysis of the jump size distribution. The advantages of the dynamics given by the refined BN-S model, given by \eqref{1}, \eqref{2new}, and \eqref{4new}, over existing models are significant. This minor change in the model incorporates \emph{long range dependence} without actually changing the model.

With this in mind, we consider $z=(z_1,z_2)$ a two-dimensional L\'evy process defined by L\'evy triplet $(\mu, \Sigma, \nu^*)$, where $\mu=[\mu_1,\mu_2]$ is the two-dimensional drift, $\Sigma=\left[ \begin{array}{cc} \sigma_1^2 & \rho \\ \rho & \sigma_2^2 \end{array} \right]$ is a symmetric non-negative definite matrix representing the diffusion, and $\nu^*$ is a two-dimensional L\'evy measure defined by a product of two one-dimensional L\'evy measures $\nu_1$ and $\nu_2$ with densities $\nu_k(dx)=(1+\alpha_kx)\nu(dx)$ for some L\'evy measure $\nu$ defined on $\mathbb{R}^+$.

We wish to test the hypotheses
\begin{align}\label{jumphyp}
\nonumber & \! \!\! \!\! \!\! \! \!\! \!\! \!\! H_{00}: \alpha_1=0, \quad \alpha_2=0, & H_{10}: \alpha_1=a_1> 0, \quad a_2=0,  \\
 &\! \!\! \!\! \!\! \!\! \!\! \!\! \! H_{01}: a_1=0,\quad \alpha_1=a_2> 0, &   H_{11}: \alpha_1=a_1> 0, \quad \alpha_1=a_2> 0. \! \!  \!    \! \!  \! \!\! \!  \! \!\! \!  \! \!
\end{align}
These now address the size of the jumps in the L\'evy process.

Similar to last section, the L\'evy process generates a filtration, which will be denoted $\mathcal{F}_t$, along with marginal filtrations $\mathcal{F}^{(1)}_t$ and $\mathcal{F}^{(2)}_t$. Further, the hypotheses and diffusion correlation $\rho$ induce probability measures $P_{ij,\rho}$ and marginal probability measures $P_i^{(k)}$. We seek to create optimal decision rules $(\tau, \delta_\tau)$, where $\tau$ is a stopping rule with respect to $\mathcal{F}_t$ and $\delta_\tau$ is a random variable taking values in the index set $\{00,01,10,11\}$. 

Let $u^{(i,k)}_t$ be defined as in \eqref{logratio}
and still consider a rectangle $[l_1,r_1] \times [l_2,r_2] \subset \mathbb{R}^2$. The decision rules for the non-correlated one-dimensional cases are as in \eqref{onedec}, with the combined decision rule in \eqref{optdec}. 

It is known that if $(X_t)_{t \geq 0}$ is a L\'evy process then there exists a unique c\'adl\'ag process $(Z_t)_{t \geq 0}$ such that 
$$dZ_t= Z_{t-}\, dX_t, \quad Z_0=1.$$
$Z$ is called the stochastic exponential or Dol\'eans-Dade exponential of $X$ and is denoted by $Z= \mathcal{E}(X)$. We can now derive the infinitesimal generators:

\begin{theorem}\label{jumpgen}
With the process $u^{(i,k)}_t$ defined as in \eqref{logratio}, we have two-dimensional infinitesimal generators, for the process $u^{i,j}_t=(u_t^{(i,1)},u_t^{(j,2)})$, defined by
$$\mathcal{L}_{ij,\rho}\xi(x)= (-1)^{i+1}\gamma_1 \xi_{x_1}(x) + (-1)^{j+1}\gamma_2 \xi_{x_2}(x) +\frac{1}{2} \beta_1^2 \xi_{x_1x_1}(x) +\frac{1}{2} \beta_2^2 \xi_{x_2x_2}(x)+\rho \beta_1 \beta_2 \xi_{x_1x_2}(x)$$
$$\quad \quad \quad+(-1)^{i+j}\int_{\mathbb{R}^2_+} \left(\xi(x+y)-\xi(x)-\frac{y_1 \xi_{x_1}(x)+y_2 \xi_{x_2}(x)}{1+\|y\|}\right)K_1(dy_1)K_2(dy_2),$$
for any suitable $\xi$, where 
\begin{align}
\nonumber x&=(x_1,x_2), \quad y=(y_1,y_2)\\
\beta_k&= -a_k\int_{x_k>0} (1\wedge x_k) \sigma_k^{-1}  x_k\nu(dx_k)\label{beta}\\
m_k&=a_k\int_{x_k>1} x_k\nu(dx_k)\label{m}\\
\gamma_k&=m_k-\frac{\beta_k^2}{2}+\int_0^1 (\log(1+x_k)^2-x_k)a_k\nu(dx_k).\label{gamma}\\
K_k&=a_k\log(1+x_k)\nu_k\label{K}. 
\end{align}
\end{theorem}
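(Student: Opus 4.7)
The plan is to mirror the strategy used in the proof of Theorem \ref{genlogratio}: identify the Radon-Nikodym density between $P_i^{(k)}$ and $P_{1-i}^{(k)}$ via a generalized Girsanov transform for L\'evy processes, extract the L\'evy triplet of $u^{(i,k)}_t$ by applying It\^o's formula to the logarithm of a stochastic exponential, and finally assemble the two-dimensional generator from the L\'evy-Khintchine representation of the process $(u^{(i,1)}_t, u^{(j,2)}_t)$.

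First I would invoke the generalized Girsanov's theorem (as cited in \cite{Jens}, Theorem 1.20) to write $\frac{dP^{(k)}_i}{dP^{(k)}_{1-i}}\big|_{\mathcal{F}_t^{(k)}} = \mathcal{E}(M^{(i,k)})_t$ for a suitable martingale $M^{(i,k)}$. Under $P_{1-i}^{(k)}$ the L\'evy measure of $z_k$ differs from its law under $P_i^{(k)}$ by the multiplicative jump density $1+(-1)^{i+1}a_k x_k$, and the requirement that the remaining two characteristics be preserved fixes a compensating Brownian drift whose size produces the parameter $\beta_k$ of \eqref{beta}; the factor $(1\wedge x_k)\sigma_k^{-1}$ appears exactly because the small-jump truncation must be re-scaled by the ambient diffusion coefficient. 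The alternating sign $(-1)^{i+1}$ tracks the direction of the density.

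Second, I would compute $u^{(i,k)}_t = \log \mathcal{E}(M^{(i,k)})_t$ by invoking \cite{Tank}, Proposition 8, which gives the characteristics of the logarithm of a stochastic exponential of a L\'evy process. This yields a triplet with Gaussian part $\beta_k^2$, L\'evy measure $K_k$ as in \eqref{K} (obtained by reweighting $\nu_k$ by the logarithmic jump transform), and drift $(-1)^{i+1}\gamma_k$ with $\gamma_k$ as in \eqref{gamma}, where the three summands account respectively for the linear drift $m_k$ from the large jumps \eqref{m}, the It\^o correction $-\beta_k^2/2$, and the compensator $\int_0^1(\log(1+x_k)^2 - x_k) a_k\nu(dx_k)$ for the small jumps. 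Once the one-dimensional triplet is pinned down, writing its generator in L\'evy-Khintchine form produces the axial drift, second-derivative, and compensated jump pieces separately.

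Third, I would lift to the two-dimensional process $(u^{(i,1)}_t,u^{(j,2)}_t)$. Because $\nu^*=\nu_1\otimes\nu_2$, the jumps in the two coordinates never coincide, so the joint jump integrand is the double integral over $\mathbb{R}^2_+$ with kernel $K_1(dy_1)K_2(dy_2)$ compensated by the truncation $\frac{y_1\xi_{x_1}(x)+y_2\xi_{x_2}(x)}{1+\|y\|}$; the global sign $(-1)^{i+j}$ reflects that the log-ratio direction can be flipped independently in each coordinate. The diffusion contribution inherits the correlation $\rho$ from $\Sigma$, producing the cross term $\rho\beta_1\beta_2\xi_{x_1x_2}(x)$, and the purely axial drifts and second derivatives coalesce into the first four summands of the claimed formula.

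The main obstacle is Step~2: the It\^o expansion of $\log\mathcal{E}(M^{(i,k)})_t$ requires carefully separating the drift, the compensated small-jump martingale with its truncation $(1\wedge x_k)$, and the absolutely convergent large-jump sum, together with verifying the integrability conditions needed for \cite{Tank} Proposition 8 to apply to $M^{(i,k)}$. Once that identification is complete and the three-summand expression for $\gamma_k$ is isolated, the remaining assembly in Step~3 is a direct application of the L\'evy-Khintchine generator formula to the product-form two-dimensional L\'evy process.
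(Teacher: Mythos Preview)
Your proposal is correct and follows essentially the same route as the paper: apply the generalized Girsanov theorem from \cite{Jens} to express the density as a Dol\'eans--Dade exponential $\mathcal{E}((-1)^{i+1}N^{(k)}_\cdot)_t$, invoke \cite{Tank} Proposition~8 to read off the L\'evy triplet of $u^{(i,k)}_t=\log\mathcal{E}(\cdot)$, and then assemble the two-dimensional generator from the L\'evy--Khintchine form. One small slip in your narrative: the product structure $\nu^\ast=\nu_1\otimes\nu_2$ on $\mathbb{R}^2_+$ means the two coordinates jump \emph{together}, not that they never coincide---this is precisely why the joint jump kernel is $K_1(dy_1)K_2(dy_2)$ over $\mathbb{R}^2_+$ rather than a measure supported on the axes.
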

\begin{proof}
Since $z_k$ is a L\'evy process with characteristics $(\mu_k, \sigma_k^2, \nu_k)$ under $P_0^{(k)}$ and characteristics $(\mu_k, \sigma_k^2, (1+a_kx_k)\nu_k)$ under $P_1^{(k)}$, we apply the generalized Girasanov's Theorem. Using $\beta_k$ as in \eqref{beta}, we obtain
$$\frac{dP^{(k)}_i}{dP^{(k)}_{1-i}}=\mathcal{E}\left((-1)^{i+1}N^{(k)}.\right)_t,$$
where
$$N^{(k)}_t=\beta_k W_t +\int_0^t\int_{x_k>0} a_kx_k (J_k-\nu_k)(ds,dx_k),$$
$a_kJ_k$ is the jump measure for $N$, $W$ is a standard Brownian motion, and $\mathcal{E}$ is the Dol\'eans-Dade exponential. This gives that $N_t^{(k)}$ is a L\'evy process with characteristics $$(  (-1)^{i}m_k, \beta_k^2,(-1)^{i+1} a_k\nu_k).$$ Then, by \cite{Tank} (Proposition 8), we obtain characteristics $$((-1)^{i}\gamma_k,\beta_k^2,(-1)^{i+1}K_k)$$ for $u^{(i,k)}_t$. Finally, by \cite{Tank, AS}, the process $\left( u^{(i,1)}_t, u^{(j,2)}_t \right)$ has the stated generator.
\end{proof}
Assign $\xi_{ij,\rho}$ to be the probability of a correct decision in world $ij$. Then we have the partial integro-differential equation $\mathcal{L}_{ij,\rho}\xi_{ij,\rho}=0$ with boundary conditions
\begin{align}
\xi_{00,\rho}(l_1,l_2)=1, \quad \quad \quad & \xi_{01,\rho}(l_1,r_2)=1, \nonumber \\
\xi_{00,\rho}(r_1,y)=0,  \quad \quad \quad & \xi_{01,\rho}(x,l_2)=0, \nonumber \\
\xi_{00,\rho}(x,r_2)=0,  \quad \quad \quad & \xi_{01,\rho}(r_1,y)=0, \nonumber  \\
\xi_{10,\rho}(r_1,l_2)=1, \quad \quad \quad & \xi_{11,\rho}(r_1,r_2)=1, \nonumber  \\
\xi_{10,\rho}(l_1,y)=0,  \quad \quad \quad& \xi_{11,\rho}(l_1,y)=0,  \nonumber \\
\xi_{10,\rho}(x,r_2)=0, \quad \quad \quad & \xi_{11,\rho}(x,l_2)=0, \label{bcs}
\end{align}
for $x \in [l_1,r_1]$ and $y \in [l_2,r_2]$. Further, we have $ \xi_{ij,\rho}>0$ inside $R=(l_1,r_1) \times (l_2,r_2)$.

Before proceeding to prove the existence of a solution to such a boundary value problem, we state the following definitions and a theorem from 
\cite{EU} that will be used.
\begin{definition}
An upper semicontinuous function $l: \R^2\to \R$ is a \emph{subsolution} of $$F(0,\xi,D\xi,D\xi^2,\mathcal{I}[\xi](x))=0$$ subject to boundary conditions \eqref{bcs} if for any test function $\phi\in C^2(\R^2)$, at each maximum point $x_0 \in \bar{R}$ of $l-\phi$ in $B_\delta(x_0)$, we have 
$$E(l,\phi,x_0):=F(x_0,l(x_0),D\phi(x_0),D^2\phi(x_0), I_\delta^1[\phi](x_0)+I_\delta^2[l](x_0))\leq 0 \hbox{   if $x_0\in R$}$$
or
$$\min(E(l,\phi,x_0);u(x_0)-g(x_0))\leq 0 \hbox{  if $x_0\in \partial R$},$$
where
\begin{align*}
I_\delta^1[\phi](x_0)=\int_{|z|<\delta} \left( \phi(x_0+z)-\phi(x_0)-(D\phi(x_0) \cdot z)\textbf{1} _B (z)\right) d\mu_{x_0}(z), \\
I_\delta^2[u](x_0)=\int_{|z|\geq\delta} \left( u(x_0+z)-u(x_0)-(D\phi(x_0) \cdot z)\textbf{1}_B (z)\right) d\mu_{x_0}(z) .
\end{align*}

Similarly, a lower semicontinuous function $u: \R^2 \to \R$ is a \emph{supersolution} of the same boundary value problem if for any test function $\phi\in C^2(\R^2)$, at each minimum point $x_0 \in \bar{R}$ of $u-\phi$ in $B_\delta(x_0)$, we have 
$$E(u,\phi,x_0)\geq 0\hbox{   if $x_0\in R$}$$
or 
$$\max(E(l,\phi,x_0);u(x_0)-g(x_0))\leq 0 \hbox{  if $x_0\in \partial R$}.$$
Finally, a \emph{viscosity solution} is a function whose upper and lower semicontinuous envelopes are respectively a \emph{sub-solution} and a \emph{super-solution}.
\end{definition}

\begin{theorem}\label{EUtheorem}
If $F:\R^2 \times \R \times \R^2 \times \mathbb{S}^2 \times \R \to \R$, where $\mathbb{S}^n$ is the space of $n\times n$ symmetric matrices, and
\begin{enumerate}
\item[(A1)] $F(x,u,p,X,i_1)\leq F(x,u,p,Y,i_2)$ if $X\geq Y$ and $i_1\geq i_2$,
\item[(A2)] there exists $\gamma>0$ such that for any $x\in \R^2,$ $u,v\in \R$, $p\in \R^2$, $X\in \mathbb{S}^2$, and $i\in \R$,
$$F(x,u,p,X,i)-F(x,v,p,X,i)\geq \gamma (u-v) \hbox{   if $u \geq v$},$$
for some $\epsilon>0$ and $r(\beta)\to 0$ as $\beta \to 0$, we have
$$F(y,v,\epsilon^{-1}(x-y),Y,i)-F(x,v,\epsilon^{-1}(x-y),X,i)\leq \omega_R (\epsilon^{-1}|x-y|^2+|x-y|+r(\beta)),$$
\item[(A3)] $F$ is uniformly continuous with respect to all arguments,\\
\item[(A4)] $\sup_{x\in \R} |F(x,0,0,0,0)|<\infty$,\\
\item[(A5)] $K=K_1 \times K_2$ is a L\'evy-It\^o measure,
\item[(A6)] the inequalities in \eqref{limsupinf} are strict,
\item[(A7)] for any $R>0$, there exists a modulus of continuity $\omega_R$ such that, for any $x,y\in\R^2$, $|v|\leq R$, $i\in \R$, and for any $X,Y\in \S^2$ satisfying
$$\left[\begin{array}{cc} X & 0 \\ 0 & Y\end{array}\right] \leq \frac{1}{\epsilon}\left[\begin{array}{cc} I & -I \\ -I & I \end{array}\right]+r(\beta)\left[\begin{array}{cc} I & 0 \\ 0 & I \end{array}\right],$$
\end{enumerate}
then there is a unique solution to $F(0,\xi,D \xi, D^2 \xi,\mathcal{I}[\xi](x))=0$ between any pair of super-solution and sub-solutions, defined below, where 
$$\mathcal{I}[\xi](x):=\int_{\mathbb{R}^2_+} \left(\xi(x+y)-\frac{y_1 \xi_{x_1}(x)+y_2 \xi_{x_2}(x)}{1+\|y\|}\right)K_1(dy_1)K_2(dy_2).$$
\end{theorem}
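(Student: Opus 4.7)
The plan is to follow the now-standard two-step strategy for viscosity solutions of second-order PIDEs: first establish a strong comparison principle from the structural hypotheses (A1)--(A7), then invoke Perron's method between the given sub-solution and super-solution to produce the unique viscosity solution sitting between them.

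For comparison, let $l$ be an upper semicontinuous sub-solution and $u$ a lower semicontinuous super-solution, and suppose toward contradiction that $M := \sup_{\bar R}(l-u) > 0$. Introduce the doubled functional
\begin{equation*}
\Phi_{\epsilon,\beta}(x,y) = l(x) - u(y) - \frac{|x-y|^2}{2\epsilon} - \beta(|x|^2+|y|^2),
\end{equation*}
pick a maximizer $(x_\epsilon, y_\epsilon)$, and invoke the classical penalization estimates to obtain $\epsilon^{-1}|x_\epsilon - y_\epsilon|^2 \to 0$ and $l(x_\epsilon) - u(y_\epsilon) \to M$ as $\epsilon,\beta \to 0$. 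Ishii's lemma for semicontinuous functions then produces matrices $X, Y \in \S^2$ satisfying exactly the matrix inequality assumed in (A7). The critical technical step is handling the nonlocal operator $\mathcal I$: split it at radius $\delta$ into $I_\delta^1$, evaluated on the smooth quadratic test function and bounded by $O(\delta^2/\epsilon)$ via the L\'evy-It\^o integrability in (A5), and $I_\delta^2$, evaluated on $l$ and $u$ directly. The doubling inequality
\begin{equation*}
l(x_\epsilon + z) - u(y_\epsilon + z) \leq l(x_\epsilon) - u(y_\epsilon) + \beta\bigl(|x_\epsilon + z|^2 + |y_\epsilon + z|^2 - |x_\epsilon|^2 - |y_\epsilon|^2\bigr),
\end{equation*}
inherited from the maximum property of $\Phi_{\epsilon,\beta}$, then controls the difference of the two $I_\delta^2$-terms uniformly in $\delta$.

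Subtracting the sub- and super-solution inequalities at $x_\epsilon$ and $y_\epsilon$, using (A1) to preserve ordering in the matrix and integral arguments, (A2) to extract the coercive term $\gamma(l(x_\epsilon)-u(y_\epsilon))$, and (A7) to absorb the remaining difference in $F$, yields
\begin{equation*}
\gamma\bigl(l(x_\epsilon) - u(y_\epsilon)\bigr) \leq \omega_R\bigl(\epsilon^{-1}|x_\epsilon - y_\epsilon|^2 + |x_\epsilon - y_\epsilon| + r(\beta)\bigr) + C\delta + o_\beta(1).
\end{equation*}
Passing $\delta \to 0$, then $\epsilon \to 0$, then $\beta \to 0$ in that order forces $\gamma M \leq 0$, contradicting $M > 0$. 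Uniqueness is then immediate, since any two solutions dominate each other. For existence I apply Perron's method, setting
\begin{equation*}
\xi(x) := \sup\{w(x) : w \text{ is a sub-solution with } l \leq w \leq u\},
\end{equation*}
where $l,u$ now denote the given pair; the upper semicontinuous envelope $\xi^*$ is a sub-solution by stability under supremum, while the lower semicontinuous envelope $\xi_*$ is a super-solution via the usual bump-perturbation contradiction. Comparison then forces $\xi^* = \xi_*$, producing a continuous viscosity solution sitting between $l$ and $u$.

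The principal obstacle is precisely the splitting and control of the nonlocal operator during the doubling step, because $l$ and $u$ are only semicontinuous on the large-jump region $\{|z| \geq \delta\}$. Hypothesis (A5), i.e.\ the L\'evy-It\^o structure of $K = K_1 \times K_2$, is what makes $I_\delta^1$ of order $O(\delta^2/\epsilon)$ and hence absorbable, and the strictness in (A6) is what prevents the semicontinuous envelopes from degrading the sub/super-solution inequalities when passed through the integral. Without either hypothesis the ordered triple limit $\delta, \epsilon, \beta \to 0$ collapses, which is the central difficulty separating PIDE comparison from pure second-order PDE comparison.
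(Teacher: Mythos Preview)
The paper does not prove this theorem at all: it is quoted verbatim as a result from Barles--Chasseigne--Imbert \cite{EU} and used as a black box in the subsequent Lemma and Theorem~\ref{solution}. So there is no ``paper's own proof'' to compare against; what you have written is a sketch of the argument in \cite{EU} itself.

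As such a sketch, your outline of comparison-via-doubling plus Ishii's lemma plus Perron's method is the correct architecture, and your handling of the nonlocal term (split at radius $\delta$, control $I_\delta^1$ on the test function via the L\'evy integrability, control $I_\delta^2$ on $l,u$ via the maximum property of $\Phi_{\epsilon,\beta}$) is the right idea. One point of imprecision: you attribute (A6) to ``prevent[ing] the semicontinuous envelopes from degrading the sub/super-solution inequalities when passed through the integral,'' but (A6) is a boundary condition---the strict liminf/limsup inequalities in \eqref{limsupinf} are what rule out loss of the Dirichlet data at $\partial R$ due to jumps carrying the process out of the domain. Your interior comparison argument never invokes it; it enters only when the doubling maximizer $(x_\epsilon,y_\epsilon)$ accumulates on $\partial R$, a case you have not discussed and which is in fact the main novelty of \cite{EU} over earlier PIDE comparison results.
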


\begin{lemma}
In particular, our function $$F(x,u,p,X,i):=Mu+\langle \gamma_1 ,\gamma_2\rangle \cdot p -\Tr \left( \left[ \begin{array}{cc} \beta_1/2 & 0 \\ 0 & \beta_2/2 \end{array}\right]X \right)- i$$ satisfies (A1)-(A4)  and our measure $K$ satisfies (A5) in \eqref{EUtheorem}, where 
$$M=\int_{\mathbb{R}^2_+}K_1(dy_1)K_2(dy_2).$$
\end{lemma}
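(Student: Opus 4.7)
The plan is to verify assumptions (A1)--(A5) in turn, exploiting the fact that the proposed $F$ is affine in its last four arguments and has no explicit dependence on $x$. (I read the diffusion matrix in $F$ as $\mathrm{diag}(\beta_1^2/2,\beta_2^2/2)$, consistent with the second-order coefficients in the generator of Theorem \ref{jumpgen}.) Two of the items are immediate. For (A3), $F$ is globally Lipschitz in $(u,p,X,i)$ with fixed constant coefficients $M$, $\gamma_k$, $\beta_k^2$, and constant in $x$, so it is jointly uniformly continuous. For (A4), direct substitution gives $F(x,0,0,0,0)=0$, hence the supremum is zero.

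For (A1), put $D:=\mathrm{diag}(\beta_1^2/2,\beta_2^2/2)$, which is positive semi-definite. If $X\geq Y$ in the Loewner order then $\Tr(D(X-Y))\geq 0$, whence $-\Tr(DX)\leq -\Tr(DY)$; combining this with $-i_1\leq -i_2$ whenever $i_1\geq i_2$ gives the required monotonicity. For the $u$-monotonicity part of (A2) one has $F(x,u,p,X,i)-F(x,v,p,X,i)=M(u-v)$, so $\gamma:=M$ works as long as $M>0$, which follows since each $K_k$ in \eqref{K} is a nonzero positive measure. The spatial-continuity half of (A2) is almost automatic because $F$ is $x$-independent: the relevant difference collapses to $\Tr(D(X-Y))$, and the matrix inequality from (A7), tested against vectors of the form $(a,a)$, yields a bound of order $r(\beta)(\beta_1^2+\beta_2^2)$ on $\Tr(D(X-Y))$, which is absorbed into the modulus $\omega_R$.

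Finally, (A5) requires $K=K_1\times K_2$ to be a L\'evy-It\^o measure on $\mathbb{R}^2_+$, i.e., $\int_{\mathbb{R}^2_+}(1\wedge|y|^2)\,K(dy)<\infty$. Using the elementary bound $1\wedge|y|^2\leq(1\wedge y_1^2)+(1\wedge y_2^2)$ together with the product structure, this reduces to the one-dimensional estimates $\int_0^\infty(1\wedge x^2)\,a_k\log(1+x)\,\nu(dx)<\infty$. Near zero, $\log(1+x)\sim x$, so the integrand behaves like $x^3$ and is controlled by the L\'evy condition on $\nu$; for large $x$, one needs the mild tail bound $\int_1^\infty\log(1+x)\,\nu(dx)<\infty$, which is natural in the subordinator models of Section \ref{ch:bigsmalljumps}. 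I expect the principal subtleties to be the strict positivity and finiteness of $M$ and the quantitative matching of $\omega_R$ to the constants produced by (A7); both are bookkeeping issues tied to integrability of $\nu$, but neither introduces a genuinely new obstacle.
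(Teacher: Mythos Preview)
Your proposal is correct and follows the same direct-verification approach as the paper: compute the relevant differences for (A1) and (A2), invoke linearity/affineness for (A3), note $x$-independence for (A4), and appeal to the underlying L\'evy structure for (A5). You are in fact more thorough than the paper's own proof, which does not spell out the spatial-continuity half of (A2) or the integrability estimate for (A5), and your reading of the diffusion matrix as $\mathrm{diag}(\beta_1^2/2,\beta_2^2/2)$ correctly matches the generator in Theorem~\ref{jumpgen}.
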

\begin{proof}
First, consider (A1): 
$$F(x,u,p,X,i_1)- F(x,u,p,Y,i_2) =  \Tr \left( \left[ \begin{array}{cc} \beta_1/2 & 0 \\ 0 & \beta_2/2 \end{array}\right](Y-X) \right)+i_2- i_1 \geq0$$
if $i_2\leq i_1$ and $Y\leq X$.

Next, $F(x,u,p,X,i)- F(x,v,p,X,i)=M(u-v)$, so choosing $\gamma=M >0$, we have property (A2).

Property (A3) is satisfied because $F$ is linear in each argument, and (A4) is satisfied because $F$ does not depend on its first argument explicitly. Last, $K$ is a L\'evy-It\^o measure by the assumptions of the underlying L\'evy processes. 

\end{proof}

Note that the $F$ above corresponds to case $ij=00$. The other three cases can be similarly satisfied through manipulation of the signs in $F$. Before proceeding, we present a few formal definitions:
\begin{definition}
We write that a function $f(x)=O(g(x))$  if we have some $M,\epsilon \in \R$ satisfying $|f(x)|\leq Mg(x)$ for all $x>\epsilon$. Similarly, we write that a function $f(x)=o(g(x))$ if we have some $M,\epsilon \in \R$ satisfying $|f(x)|< Mg(x)$ for all $x>\epsilon$.

The norm $\| f \|_\infty$ is defined as the essential supremum of the absolute value of $f$ over $\Omega$. It is the smallest number so that $\{ x : |f(x)|\geq \|f\|_\infty\}$ has measure zero.
\end{definition}

We now state the additional limit assumptions on $F$ from \cite{EU}: 
\begin{align} \liminf_{y\to x, y\in \bar{\Omega}, \eta \downarrow 0, d(y)\eta^{-1} \to 0} \left[ \sup_{0<\delta \in [d(y),r)} \inf_{s\in [-R,R]} F(y,s,p_\eta(y),M_\eta(y),I_{\eta,\delta,r}(y))\right]&<0, \nonumber \\ 
\limsup_{y\to x, y\in \bar{\Omega}, \eta \downarrow 0, d(y)\eta^{-1} \to 0} \left[ \inf_{0<\delta \in [d(y),r)} \sup_{s\in [-R,R]} F(y,s,-p_\eta(y),-M_\eta(y),-I_{\eta,\delta,r}(y))\right]&<0, \label{limsupinf}\end{align}
where
\begin{equation*}
p_\eta(y) =O(\epsilon^{-1})+\frac{k_1+o(1)}{\eta} Dd(y),
\end{equation*}
\begin{equation*}
M_\eta(y)= O(\epsilon^{-1})+\frac{k_1+o(1)}{\eta} D^2d(y)-\frac{k_2+o(1)}{\eta^2} Dd(y) \otimes Dd(y),
\end{equation*}
\begin{align*}
I_{\eta,\delta,r}(y)&=-\nu I_{\delta,r}^{\hbox{ext},1}(y)+2\|u\|_\infty I_{\beta(\nu),r}^{\hbox{int},1}(y)\\
&-\frac{k_1+o(1)}{\eta} \left(I^{\hbox{tr}}(y)+I_{\beta(\eta),r}^{\hbox{int},2}(y)+I_{\delta,r}^{\hbox{ext},2}(y)-\|D^2d\|_\infty I^4_{\delta,\beta(\eta),r}(y)\right)\\
&+O(\epsilon^{-1})\left( 1+ o(1)I_{\beta(\eta),r}^{\hbox{int},3}(y)+o(1)I_{\delta,r}^{\hbox{ext},3}(y) \right),
\end{align*}
with $O(\epsilon^{-1})$ not depending on $k_1$ nor $k_2$, and
\begin{equation*}
\mathcal{A}_{\delta,\beta,r}(x) :=\{z\in B_r: -\delta\leq d(x+z)-d(x)\leq \beta\},
\end{equation*}
\begin{equation*}
\mathcal{A}^{\hbox{ext}}_{\delta,r}(x):=\{z\in B_r:  d(x+z)-d(x)< -\delta\},
\end{equation*}
\begin{equation*}
\mathcal{A}^{\hbox{int}}_{\beta,r}(x):=\{z\in B_r:  d(x+z)-d(x)>\beta\},
\end{equation*}
\begin{equation*}
I^{\hbox{ext},1}_{\delta,r}(x) := \int_{\mathcal{A}^{\hbox{ext}}_{\delta,r}(x)} d\mu_x(z),
\end{equation*}
\begin{equation*}
I^{\hbox{ext},2}_{\delta,r}(x) := \int_{\mathcal{A}^{\hbox{ext}}_{\delta,r}(x)} Dd(x)\cdot zd\mu_x(z),
\end{equation*}
\begin{equation*}
I^{\hbox{ext},3}_{\delta,r}(x) := \int_{\mathcal{A}^{\hbox{ext}}_{\delta,r}(x)}|z| d\mu_x(z),
\end{equation*}
\begin{equation*}
I^{\hbox{int},1}_{\beta,r}(x) := \int_{\mathcal{A}^{\hbox{ext}}_{\beta,r}(x)} d\mu_x(z),
\end{equation*}
\begin{equation*}
I^{\hbox{int},2}_{\beta,r}(x) := \int_{\mathcal{A}^{\hbox{ext}}_{\beta,r}(x)} Dd(x)\cdot zd\mu_x(z),
\end{equation*}
\begin{equation*}
I^{\hbox{int},3}_{\beta,r}(x) := \int_{\mathcal{A}^{\hbox{ext}}_{\beta,r}(x)}|z| d\mu_x(z),
\end{equation*}
\begin{equation*}
I^{4}_{\delta,\beta,r}(x) := \frac{1}{2}\int_{\mathcal{A}_{\delta,\beta,r}(x)}|z|^2 d\mu_x(z),
\end{equation*}
\begin{equation*}
I^{\hbox{tr}}(x) := \int_{r<|z|<1} Dd(x)\cdot zd\mu_x(z).
\end{equation*}

\begin{theorem}\label{solution}
The partial integro-differential equation $\mathcal{L}_{ij,0}\xi_{ij,0}=0$ subject to boundary conditions \eqref{bcs} and  $0< \xi_{ij,\rho}$ has a viscosity solution between sub-solution and super-solution
\begin{align*}
L_{ij}(x,y)&= E_{ij}\frac{\sinh \left(A_{(1,2-i)} \sqrt{B_{ij(1)}^2+L}\right) \sinh \left(A_{(2,2-j)} \sqrt{B_{ij(2)}^2+L}\right)}{\sinh\left(\frac{r_1-l_1}{\beta_1} \sqrt{B_{ij(1)}^2+L} \right)\sinh\left(\frac{r_2-l_2}{\beta_2} \sqrt{B_{ij(2)}^2+L}\right)},\nonumber \\
U_{ij}(x,y)&= E_{ij}\frac{\sinh \left(A_{(1,2-i)} \sqrt{B_{ij(1)}^2-L}\right) \sinh \left(A_{(2,2-j)} \sqrt{B_{ij(2)}^2-L}\right)}{\sinh\left(\frac{r_1-l_1}{\beta_1} \sqrt{B_{ij(1)}^2-L} \right)\sinh\left(\frac{r_2-l_2}{\beta_2} \sqrt{B_{ij(2)}^2-L}\right)},
\end{align*}
where 
\begin{align*}
A&=\left[ \begin{array}{cc} \frac{x-l_1}{\beta_1} & \frac{r_1-x}{\beta_1} \\ \frac{y-l_2}{\beta_2} & \frac{r_2-y}{\beta_2} \end{array} \right],\\
B_{ij}&=\left[ \begin{array}{cc} \frac{\gamma_1+(-1)^jC}{\beta_1} & \frac{\gamma_2+(-1)^iC}{\beta_2}  \end{array} \right], \nonumber\\
C&=\int_{\mathbb{R}^2_+} \frac{y_i}{1+\|y\|}K_1(dy_1)K_2(dy_2),\nonumber\\
E_{ij}&=\exp\left(A_{(1,i+1)} B_{ij(1)}+A_{(2,j+1)} B_{ij(2)}\right),\nonumber 
\end{align*}
and $L$ is a positive constant, provided (A6) and (A7) are satisfied. 
\end{theorem}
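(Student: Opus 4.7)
The plan is to invoke Theorem \ref{EUtheorem}. The preceding lemma verifies assumptions (A1)--(A5) for the operator $F$ and the product measure $K$, while the statement of the present theorem assumes (A6) and (A7). What remains is to show that $L_{ij}$ is a sub-solution and $U_{ij}$ is a super-solution of the problem $F(0,\xi,D\xi,D^2\xi,\mathcal{I}[\xi](x))=0$ with boundary data \eqref{bcs}, and to check the pointwise ordering $L_{ij}\le U_{ij}$ on $\bar R$, so that Theorem \ref{EUtheorem} produces a unique viscosity solution sandwiched between them.

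I would first read off the boundary data. Each of $L_{ij}$ and $U_{ij}$ factorises as the prefactor $E_{ij}(x)$ times a product of two ratios of hyperbolic sines, one in $x_1$ and one in $x_2$. By construction of the matrix $A$ and the indexing $2-i$, $2-j$, each numerator $\sinh(A_{(k,2-\cdot)}\kappa_k^{\pm})$ vanishes identically on the corresponding ``wrong'' side of the rectangle, i.e.\ the sides where \eqref{bcs} prescribes the value $0$. At the unique ``correct'' corner every sinh ratio equals $1$ and the exponent in $E_{ij}$ vanishes, yielding the prescribed unit value. Positivity on the open rectangle follows from positivity of $\sinh$ on positive arguments, so both $L_{ij}$ and $U_{ij}$ land in the admissible class $\xi_{ij,\rho}>0$.

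For the interior inequalities, set $\kappa_k^{\pm}=\sqrt{B_{ij(k)}^{2}\pm L}$. The constant $C$ in $B_{ij}$ is chosen precisely so that the compensator $\frac{y_k\xi_{x_k}}{1+\|y\|}$ coming from $\mathcal{I}$ is absorbed when the nonlocal term in $\mathcal{L}_{ij,0}$ is written out explicitly, and the linear part of the exponent in $E_{ij}$ absorbs the first-order drift $(-1)^{i+1}\gamma_k\partial_{x_k}$ into a pure diffusion problem for the remaining sinh factors. A direct computation then yields
\begin{equation*}
\mathcal{L}_{ij,0}L_{ij}(x)=L\,L_{ij}(x)+(-1)^{i+j}\!\!\int_{\R^2_+}\!\!\bigl(L_{ij}(x+y)-L_{ij}(x)\bigr)K_1(dy_1)K_2(dy_2),
\end{equation*}
and the analogous identity with $-L$ in place of $+L$ for $U_{ij}$. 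Choosing $L$ strictly larger than the essential supremum over $\bar R$ of the modulus of the nonlocal remainder divided by the function itself produces the classical inequalities $\mathcal{L}_{ij,0}L_{ij}\ge 0\ge \mathcal{L}_{ij,0}U_{ij}$, which immediately imply the corresponding viscosity sub- and super-solution inequalities at every interior test point.

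The main obstacle will be this last quantitative step: producing a uniform bound of the nonlocal remainder in terms of the function itself. Boundedness and $C^{2}$-smoothness of $L_{ij}$ and $U_{ij}$ on $\bar R$ control the near-origin contribution via a Taylor expansion of $\xi(x+y)-\xi(x)$, while the L\'evy--It\^o property (A5) inherited from the underlying L\'evy triplet controls the large-jump tail of $K_1\otimes K_2$; combining the two estimates gives an admissible lower threshold for $L$. Once this threshold is met, the inequality $\kappa_k^{-}<\kappa_k^{+}$ and the monotonicity of $\alpha\mapsto \sinh(\alpha a)/\sinh(\alpha b)$ for $0<a<b$ deliver the ordering $L_{ij}\le U_{ij}$, and Theorem \ref{EUtheorem} under the standing assumptions (A6) and (A7) yields the desired viscosity solution.
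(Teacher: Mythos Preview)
Your proposal follows essentially the same route as the paper. The paper writes $\xi=f(x_1)g(x_2)$, isolates the nonlocal piece $H(x)=\int\xi(x+y)K(dy)$, and separates variables into two second-order ODEs whose sinh--exponential solutions are precisely the stated $L_{ij}$ and $U_{ij}$; your ``direct computation'' identity $\mathcal{L}_{ij,0}L_{ij}=L\,L_{ij}+(-1)^{i+j}\int(L_{ij}(x+y)-L_{ij}(x))K$ is exactly what one gets after solving those ODEs and reassembling, and the final appeal to Theorem~\ref{EUtheorem} is the same.

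One point to tighten: your proposed threshold for $L$ --- the essential supremum over $\bar R$ of the nonlocal remainder divided by the function itself --- is not obviously finite, because $L_{ij}$ vanishes along two sides of $\partial R$. The paper handles the two directions asymmetrically: for the sub-solution side it simply drops $H\ge 0$ (yielding the requirement $L\ge M$), and for the super-solution side it invokes a relative bound $\xi(x+y)-\xi(x)\le K\xi(x)$ to get $L\ge KM$, hence $L=\max\{KM,M\}$. Use that one-sided positivity rather than a two-sided quotient bound and your argument lines up with the paper's.
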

\begin{proof}
We define
\begin{align*}
H(x)&=\int_{\mathbb{R}^2_+} \xi(x+y) K_1(dy_1)K_2(dy_2),\\
\xi(x)&=f(x_1)g(x_2). \nonumber
\end{align*}
Consequently,
\begin{align}
0=\mathcal{L}_{ij,0}\xi(x)=& (-1)^{i+1}\gamma_1 \xi_{x_1}(x) + (-1)^{j+1}\gamma_2 \xi_{x_2}(x) +\frac{1}{2} \beta_1^2 \xi_{x_1x_1}(x) +\frac{1}{2} \beta_2^2 \xi_{x_2x_2}(x)\nonumber\\
&+(-1)^{i+j}\int_{\mathbb{R}^2_+} \left(\xi(x+y)-\xi(x)-\frac{y_1 \xi_{x_1}(x)+y_2 \xi_{x_2}(x)}{1+\|y\|}\right)K_1(dy_1)K_2(dy_2)\nonumber
\end{align}
can be rewritten as
\begin{align}
0=& (-1)^{i+1}\gamma_1 f'(x_1)g(x_2) + (-1)^{j+1}\gamma_2 f(x_1)g'(x_2) +\frac{1}{2} \beta_1^2 f''(x_1)g(x_2) +\frac{1}{2} \beta_2^2 f(x_1)g''(x_2)\nonumber\\
&+(-1)^{i+j}H(x)+(-1)^{i+j+1}Mf(x_1)g(x_2)\nonumber \\
&+(-1)^{i+j+1}Cf'(x_1)g(x_2)+(-1)^{i+j+1}Cf(x_1)g'(x_2).\nonumber
\end{align}

When $ij\in \{00,11\}$, the sign on $H$ is positive; therefore, we have sub-solution equations, through separation of variables,
\begin{align*}
&\frac{1}{2} \beta_1^2 \frac{f''(x_1)}{f(x_2)}+\left((-1)^{i+j+1}C+(-1)^{i+1}\gamma_1 \right)\frac{f'(x_1)}{f(x_1)} =-\lambda_{1,ij}+(-1)^{i+j}M, \\ 
&\frac{1}{2} \beta_2^2 \frac{g''(x_2)}{g(x_2)}+\left((-1)^{i+j+1}C+(-1)^{j+1}\gamma_2\right)\frac{g'(x_2)}{g(x_2)}=\lambda_{1,ij}.\nonumber
\end{align*}
Alternatively, when $ij\in \{01,10\}$, we have these as super-solution equations.

On the other hand, since $\xi >0$ inside $R$, there exists some $K>0$ so that
$$\xi(x+y)-\xi(x)\leq K\xi(x) \iff H(x)-\int \xi(x) K_1(dy_1)K_2(dy_2)\leq KMf(x_1)g(x_2).$$
Using this, in cases $ij\in\{00,11\}$, we have super-solution equations
\begin{align*}
&\frac{1}{2} \beta_1^2 \frac{f''(x_1)}{f(x_2)}+\left((-1)^{i+j+1}C+(-1)^{i+1}\gamma_1 \right)\frac{f'(x_1)}{f(x_1)}  =-\lambda_{2,ij}+(-1)^{i+j+1}KM, \\ 
&\frac{1}{2} \beta_2^2 \frac{g''(x_2)}{g(x_2)}+\left((-1)^{i+j+1}C+(-1)^{j+1}\gamma_2\right)\frac{g'(x_2)}{g(x_2)}=\lambda_{2,ij}.\nonumber
\end{align*}
When $ij\in \{01,10\}$, we have these as sub-solution equations instead.

Now, choosing $L=\max\{KM,M\}$ and $\lambda_{k,ij}$ to yield $\pm L/2$ on the right-hand sides, the boundary-value problem gives the super-solution and sub-solutions claimed. Due to the monotonicity of $\sinh$ and $\exp$, we see that the super-solution and sub-solutions are ordered so that $ U_{ij}\geq L_{ij}\geq0$. Finally, by \cite{EU}, we have existence of a viscosity solution to  $\mathcal{L}_{ij,\rho}\xi_{ij,\rho}=0$ with boundary conditions \eqref{bcs}.
\end{proof}

\begin{remark}
Note that due to the structure of $U_{ij}$ and $L_{i,j}$, as $L/B_{ij}^2 \to 0$, the super-solution and sub-solutions tend toward each other. While this cannot occur precisely, it grants a particularly interesting condition that can reduce the size of the rectangle used in the decision rule.
\end{remark}

In the following figures (Figures 1, 2, and 3), we plot a super-solution and a sub-solution for a special case. Figure 1 depicts the monotonic nature of $U_{00}$ and $L_{00}$, and shows the boundary conditions are met. Note that the sides of the rectangle, and therefore the bounds of the domain of $x$ here, would be chosen in such a way as to have $1-\alpha_{00}$ between the graphs along the line $x=0$ in Figure 2. Finally, Figure 3 fully shows the super-solution and sub-solution of a viscosity solution.

\begin{figure}[!htb]
   \begin{minipage}{0.45\textwidth}
     \centering
     \includegraphics[width=.9\linewidth]{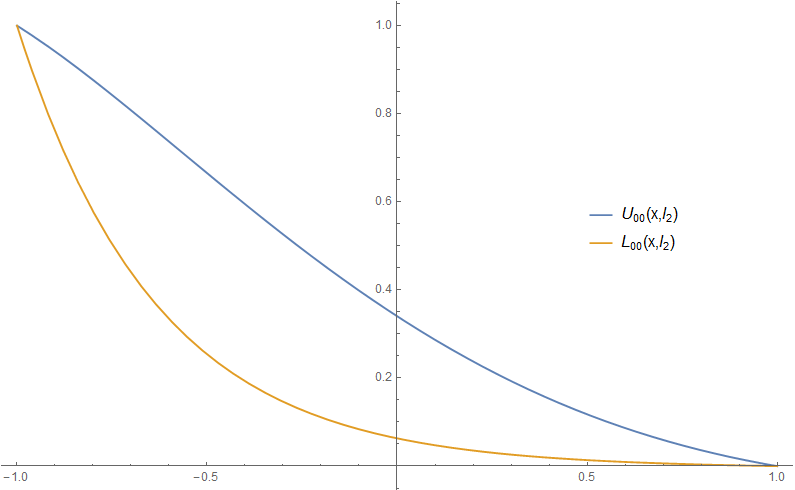}
     \caption{The $y=l_2=-1$ cross-section, with boundary conditions met.}\label{Fig:Data1}
   \end{minipage}\hfill
   \begin{minipage}{0.45\textwidth}
     \centering
     \includegraphics[width=.9\linewidth]{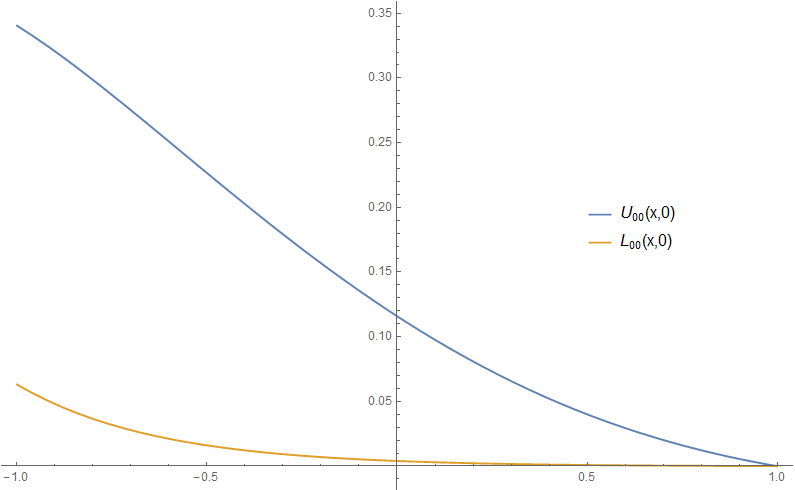}
     \caption{The $y=0$ cross-section, showing the region that determines the actual bounds of $R$.}\label{Fig:Data2}
   \end{minipage}
\end{figure}
\begin{figure}[!h]
     \centering
     \includegraphics[width=.9\linewidth]{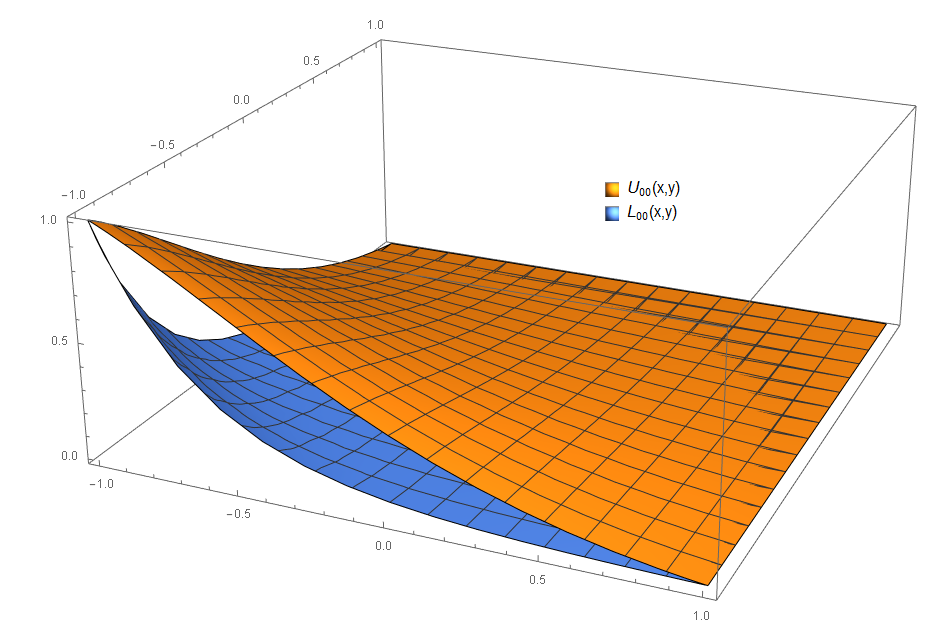}
     \caption{The functions $U_{00}$ and $L_{00}$, which fully envelope the viscosity solution $\xi$.}\label{Fig:Data1}
 \end{figure}

The super-solution and sub-solutions depend on the hypothesis parameters $a_k$. With regards to conducting the hypothesis test, one option is to bound $a_k<\mathcal{A}$. After doing so, 
$U_{ij}$ and $L_{ij}$ create two sets of inequalities  $$\{ U_{ij}(0,0)\geq 1-\alpha_{ij} : ij\in \{00,01,10,11\}\},$$
$$\{ L_{ij}(0,0)\leq 1-\alpha_{ij} : ij\in \{00,01,10,11\}\}.$$
These can potentially be solved for upper and lower estimates for the sides of the rectangle $(l_1,r_1)\times (l_2,r_2)$. While no longer optimal, these estimates can be used to conduct hypothesis tests with Type I error probability at most $\alpha_{ij}$.

\section{One-dimensional Application}

In this section, we implement the analysis presented in the last section for oil from Bakken region. The Bakken is the United States oil producing region that has emerged in recent years and expanded in part due to fracking technology.  Data are collected on oil prices and volumes representative of oil produced in that region. We obtain the average daily production data for each month from the Energy Information Agency.  Also, we obtain the nearby daily Bakken freight-on-board (FOB) prices from Thomson Reuters Eikon. 

At first we consider the Bakken oil data for the period  June 2, 2009 through March 4, 2014 (see Figure 4). 
From the data, we obtain that the statistics are as follows: drift $\mu=0.0238$, volatility $\sigma=11.419$, and the test statistic $a=19.00$.  Choosing $\alpha_0=0.9$ and $l=-0.03$, we can determine $r=\max\{0.3769,0.2569\}=0.3769$. Using the methods from the previous sections, we convert to the log-likelihood ratio process, and then we simulate it in Python. The process exits the interval on the right side 6 out of the 30 times run, indicating that the jumps have statistically low impact on the overall structure of the process.

Next we consider the Bakken oil data for the period May 24, 2011 through May 30, 2019 (see Figure 5). Conducting similar analyses for a shifted set of dates, we see that the statistics are drift $\mu=-0.0278$, volatility $\sigma=23.053$, and test statistic $a=30.43$. Choosing  $\alpha_0=0.9$, we then choose $l=-0.1$, and we find $r=\max\{0.1144,0.1017\}=0.1144$. Simulating the log-likelihood process now reveals the process exits on the right side 12 out of the 30 times run, indicating that while the jumps still have low impact on the overall structure, they have more of an impact than in the previous example, as is expected.

\begin{figure}[h!]
      \includegraphics[width=1\linewidth]{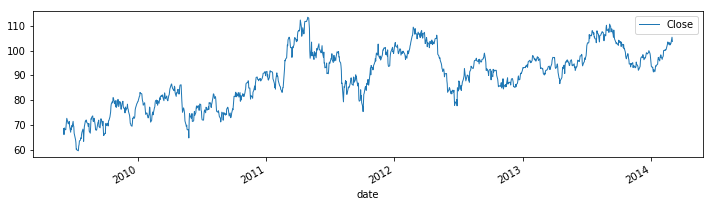}
     \caption{Bakken oil close prices for June 2, 2009-March 4, 2014.}\label{Fig:oil}
 \end{figure}

\begin{figure}[h!]
      \includegraphics[width=1\linewidth]{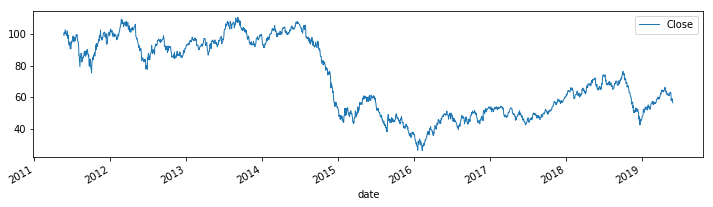}
     \caption{Bakken oil close prices for May 24, 2011-May 30, 2019.}\label{Fig:oil}
 \end{figure}

\section{Conclusion}
\label{conclusion}

In this paper we have studied a sequential decision making problem in connection to the L\'evy processes. Sequential decision making describes a situation where the decision maker makes successive observations of a process before a final decision is made. The procedure to decide when to stop taking observations and when to continue is called the \emph{stopping rule}.  This problem can be implemented for financial derivative or commodity markets. A single stochastic model may not appropriately represent derivative or commodity market dynamics. However, the procedure presented in this paper can be incorporated to determine the fluctuations in the \emph{jump term} of the L\'evy processes. Consequently, the \emph{jump term} can be replaced or modified. Thus with a minor adjustment, the original model becomes more effective.  This modification also enables \emph{long range dependence} in the new model without significantly changing the model. 

The objective in a typical sequential decision making is to find a stopping rule that optimizes the decision in terms of some loss function. For the present paper, the case $\rho=0$ is considered. The situation becomes much more involved when $\rho \neq 0$. This will be considered in a sequel of the present paper. Further, it is worth investigating whether some method exists to determine the exact bounds of the rectangle used in a decision rule based on a super-solution or sub-solution, after one bound is chosen specifically. Even more generally, additional hypothesis tests could be developed. One of such could be a test on the L\'evy measures while keeping a constant diffusion coefficient for each underlying L\'evy process. Another could be a test on the diffusion terms with no drift terms in either process.  Last, it is worth exploring the one-dimensional test more, which could yield a solution useful for finding final boundary conditions for the two-dimensional test, giving uniqueness of the likelihood function. \\

\textbf{Acknowledgment}: The authors would like to thank the anonymous reviewers for their careful reading of the manuscript and for suggesting points to improve the quality of the paper.

\end{document}